\author[P.~Leonetti]{Paolo Leonetti}
\address{Institute of Analysis and Number Theory, Graz University of Technology,  Kopernikusgasse 24/II, 8010 Graz, Austria}
\email{leonetti.paolo@gmail.com}
\urladdr{\url{https://sites.google.com/site/leonettipaolo/}} 
\keywords{Ideal core, ideal cluster point, closed convex hull, Pringsheim limit, $e$-convergence, double sequence, Euler $r$-core.}
\thanks{P.L. was supported by the Austrian Science Fund (FWF), project F5512-N26.}
\subjclass[2010]{Primary: 40A35. Secondary: 54A20, 40A05.}
\title{Characterizations of the Ideal Core}
\newcommand{\vertiii}[1]{{\left\vert\kern-0.25ex\left\vert\kern-0.25ex\left\vert #1 
    \right\vert\kern-0.25ex\right\vert\kern-0.25ex\right\vert}}
   \def\MR#1{}
\newtheorem{thm}{Theorem}[section]
\newtheorem{cor}[thm]{Corollary}%[section]
\newtheorem{lem}[thm]{Lemma}
\newtheorem{prop}[thm]{Proposition}
\theoremstyle{definition} 
\newtheorem{defi}[thm]{Definition}%[section]
\let\olddefi\defi
\renewcommand{\defi}{\olddefi\normalfont}
\let\oldexample\example
\renewcommand{\example}{\oldexample\normalfont}
\let\oldrmk\rmk
\renewcommand{\rmk}{\oldrmk\normalfont}
\providecommand{\MR}[1]{}
\providecommand{\bysame}{\leavevmode\hbox to3em{\hrulefill}\thinspace}
\providecommand{\MR}{\relax\ifhmode\unskip\space\fi MR }
\providecommand{\href}[2]{#2}
\begin{document}

\maketitle
\thispagestyle{empty}

\begin{abstract}
Given an ideal $\mathcal{I}$ on $\omega$ and a sequence $x$ in a topological vector space, we let the $\mathcal{I}$-core of $x$ be the 
least closed convex set containing $\{x_n: n \notin I\}$ for all $I \in \mathcal{I}$. We show two characterizations of the $\mathcal{I}$-core. This implies that the $\mathcal{I}$-core of a bounded sequence in $\mathbf{R}^k$ is simply the convex hull 
of its $\mathcal{I}$-cluster points. As applications, we simplify and extend several results in the context of Pringsheim-convergence and $e$-convergence of double sequences. 
\end{abstract}

%%%%%%%%%%%%%%%%%%%%%%%%%%%%%%%%%%%%%%%%%%%%%%%%%%%%%%%%%%%%%%%%%%%%%%%%%%%%

\section{Introduction}\label{sec:intro}

Let $\mathcal{I}$ be an ideal on the positive integers $\omega$, that is, a family of subsets of $\omega$ closed under taking finite unions and subsets. It is also assumed that $\mathcal{I}$ contains the family of finite sets $\mathrm{Fin}$ and that $\mathcal{I}$ is not the entire power set $\mathcal{P}(\omega)$. Let $\mathcal{I}^\star$ be the dual filter, that is, the family of $A$ such that $A^c \in \mathcal{I}$. Let $\mathcal{Z}$ be the ideal of asymptotic density zero sets, i.e., $\mathcal{Z}=\{A\subseteq \omega: \lim_n \frac{1}{n}|A\cap [1,n]|=0\}$.

Given a sequence $x$ taking values in a topological space $X$, denote by $\Gamma_x(\mathcal{I})$ the set of $\mathcal{I}$-cluster points of $x$, that is, the set of all $\ell \in X$ such that $\{n \in \omega: x_n \in U\} \notin \mathcal{I}$ for all neighborhoods $U$ of $\ell$. $\mathcal{I}$-cluster points are usually called statistical cluster points if $\mathcal{I}=\mathcal{Z}$, see \cite{MR1181163}. It is known that $\Gamma_x(\mathcal{I})$ is closed. In particular, $\Gamma_x(\mathcal{I})$ is compact for all real bounded sequences $x$, cf. \cite{MR3883171, Leo17, LMxyz} for basic facts and characterizations of $\mathcal{I}$-cluster points.

The classical \emph{Knopp core} of a complex sequence $x$ is defined by 
\begin{equation}\label{eq:originalknopp}
\mathrm{K}\text{-}\mathrm{core}\{x\}:=\bigcap_{k \in \omega} \overline{\mathrm{co}}\,\{x_n: n \ge k\},
\end{equation}
where $\overline{\mathrm{co}}\,A$ denotes the closed convex hull of $A\subseteq \mathbf{C}$, cf. \cite[Section 1]{MR1441452} and \cite{MR0142952}. It is shown in \cite{MR0487146} that if $x$ is, in addition, bounded, then the Knopp core of $x$ coincides with
\begin{equation}\label{eq:firstcharacterizationold}
\bigcap_{y \in \mathbf{C}}{\{z \in \mathbf{C}: |z-y| \le \limsup_{n\to \infty}|x_n-z|\}}
\end{equation}

It is clear that the Knopp core of a real bounded sequence $x$ is the compact interval $[\liminf_n x_n, \limsup_n x_n]$. Then, the \emph{statistical core} of a real sequence $x$ has been introduced in \cite[Section 3]{MR1416085} as the closed interval $[a,b]$, where $a$ and $b$ are the statistical limit inferior and statistical limit superior of $x$, that is, $\inf \Gamma_x(\mathcal{Z})$ and $\sup \Gamma_x(\mathcal{Z})$, respectively. This has been soon extended to complex sequences $x$ in \cite{MR1441452}, the analogue of \eqref{eq:originalknopp} being
\begin{equation}\label{eq:statisticalcoreolddefinition}
\mathrm{st}\text{-}\mathrm{core}\{x\}:=\bigcap_{H \in \mathscr{H}(x)}H,
\end{equation}
where $\mathscr{H}(x)$ is the collection of closed half-planes which contain almost all $x_k$, that is, except a set of $x_k$ for which the index set has asymptotic density zero. 
Right after, the authors prove the statistical analogue of \eqref{eq:firstcharacterizationold} for complex bounded sequences $x$, where they replace $\limsup_{n}|x_n-z|$ with the statistical limit superior of the real sequence $|x_n-z|$.

The last extension for real bounded sequences $x$ is due to Connor, see \cite[p. 410]{MR1734462} 
and \cite[Section 2]{MR2241135}: 
given a ``density'' $\mu$, that is, a finitely additive diffuse probability measure defined on a field of subsets of $\omega$, 
%the $\mathcal{I}$-core of $x$ 
the $\mu$\emph{-statistical core} of $x$ 
is the compact interval 
\begin{equation}\label{eq:connotstmu}
\mathrm{st}_\mu\text{-}\mathrm{core}\{x\}:=[\min \Gamma_x(\mathcal{I}_\mu),\max \Gamma_x(\mathcal{I}_\mu)],
\end{equation}
where $\mathcal{I}_\mu$ is the kernel of $\mu$. On the other hand, for every ideal $\mathcal{I}$, there exists a density $\mu$ such that $\mathcal{I}=\mathcal{I}_\mu$ (it is sufficient to let $\mu$ be the characteristic function of $\mathcal{I}^\star$, defined on $\mathcal{I}\cup \mathcal{I}^\star$), hence there is no further need to speak about densities.

The aim of this article is to provide a general notion of $\mathcal{I}$-core of a sequence in a topological vector space, unify the above characterizations, and establish its relationship with the set of $\mathcal{I}$-cluster points. In particular, we show that if $x$ is a bounded sequence in $\mathbf{R}^k$ then its $\mathcal{I}$-core is simply the convex hull of $\Gamma_x(\mathcal{I})$. As an application, we simplify and extend several results in the context of Pringsheim-convergence and $e$-convergence of double sequences. 

%We provide a general notion of ideal core of a sequence in a topological vector space which include and unify many variants proposed in the literature. Also, we provide characterizations which establish the relationship with the set of ideal cluster points. In particular, we show that the ideal core of a bounded sequence in $\mathbf{R}^k$ is simply the convex hull of the set of ideal cluster points. As an application, we simplify and extend several results in the context of Pringsheim-convergence and e-convergence of double sequences. 

%%%%%%%%%%%%%%%%%%%%%%%%%%%%%%%%%%%%%%%%%%%%%%%%%%%%%%%%%%%%%%%%%%%%%%%%%%%%%%%%%%%%
\section{Ideal Core and Main Results}

We start with our main definition:
\begin{defi}\label{def:maindefinitionIcore}
Given an ideal $\mathcal{I}$ on $\omega$, the $\mathcal{I}$-core of a sequence $x$ taking values in a topological vector space $X$ is 
\begin{equation}\label{eq:maindefinitionIcoreTVS}
\mathrm{core}_x(\mathcal{I}):=\bigcap_{E \in \mathcal{I}^\star}\overline{\mathrm{co}}\,\{x_n:n\in E\}.
\end{equation}
\end{defi}
In other words, the $\mathcal{I}$-core of $x$ is the least closed convex set containing $\overline{\mathrm{co}}\,\{x_n:n\in E\}$ for all $E \in \mathcal{I}^\star$. 
%biggest closed convex set contained in all $\overline{\mathrm{co}}\,\{x_n:n\in E\}$, with $E \in \mathcal{I}^\star$. 
(Hereafter, $\mathrm{co}\,A$ and $\overline{\mathrm{co}}\,A$ stand for the convex hull and closed convex hull of $A\subseteq X$, respectively.) 

Note that if $x$ is a complex sequence and $\mathcal{I}=\mathrm{Fin}$ then $\mathrm{core}_x(\mathcal{I})$ is the Knopp core defined in \eqref{eq:originalknopp}. Also, the statistical core defined in \eqref{eq:statisticalcoreolddefinition} can be rewritten as 
$$
\bigcap_{E \in \mathcal{Z}^\star} \,\bigcap_{\substack{H \text{ half-closed plane},\\ \{x_n:\, n \in E\}\,\subseteq \, H}}H,
$$
and it is known that, for each $E \in \mathcal{Z}^\star$, the inner intersection coincides with $\overline{\mathrm{co}}\,\{x_n: n \in E\}$, hence $\mathrm{st}\text{-}\mathrm{core}\{x\}=\mathrm{core}_x(\mathcal{Z})$. Lastly, if $x$ is a real bounded sequence, then the $\mu$-statistical core defined in \eqref{eq:connotstmu} corresponds to $\mathrm{core}_x(\mathcal{I}_\mu)$, as it follows from Corollary \ref{corIcounded} below.

Our main result follows:
\begin{thm}\label{thm:charactcore}
Let 
$x$ be a sequence in a first countable locally convex topological vector space $X$ 
such that the closure of $\{x_n:n \in E\}$ is compact for some $E \in \mathcal{I}^\star$. Then $$\mathrm{core}_x(\mathcal{I})=\overline{\mathrm{co}}\,\Gamma_x(\mathcal{I}).$$
\end{thm}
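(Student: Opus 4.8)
The plan is to prove the two inclusions separately, using the Hahn--Banach separation theorem (available since $X$ is locally convex) as the main geometric tool and the compactness hypothesis as the main analytic input. Throughout I would fix $E_0 \in \mathcal{I}^\star$ with $K := \overline{\{x_n : n \in E_0\}}$ compact, and use the facts recalled in the text that $\Gamma_x(\mathcal{I})$ is closed and that every $\mathcal{I}$-cluster point lies in $K$ (if $\ell \notin K$, a neighborhood of $\ell$ disjoint from $K$ witnesses $\ell \notin \Gamma_x(\mathcal{I})$); hence $\Gamma_x(\mathcal{I})$ is compact, and a finite-intersection argument over $\{\overline{\{x_n : n \in E\}} \cap K : E \in \mathcal{I}^\star\}$ shows it is nonempty.

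For the inclusion $\overline{\mathrm{co}}\,\Gamma_x(\mathcal{I}) \subseteq \mathrm{core}_x(\mathcal{I})$, since $\mathrm{core}_x(\mathcal{I})$ is closed and convex it suffices to check $\Gamma_x(\mathcal{I}) \subseteq \overline{\mathrm{co}}\,\{x_n : n \in E\}$ for each $E \in \mathcal{I}^\star$. Fixing $\ell \in \Gamma_x(\mathcal{I})$ and assuming toward a contradiction that $\ell \notin \overline{\mathrm{co}}\,\{x_n : n \in E\}$, strict separation yields a continuous linear functional $f$ and a scalar $\alpha$ with $f(x_n) \le \alpha < f(\ell)$ for all $n \in E$. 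Then $U := \{w \in X : f(w) > \alpha\}$ is a neighborhood of $\ell$ whose index set $\{n : x_n \in U\}$ is disjoint from $E$, hence contained in $E^c \in \mathcal{I}$, contradicting $\ell \in \Gamma_x(\mathcal{I})$. This direction uses neither compactness nor first countability.

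The reverse inclusion $\mathrm{core}_x(\mathcal{I}) \subseteq \overline{\mathrm{co}}\,\Gamma_x(\mathcal{I})$ is the substantial one. Given $z \notin \overline{\mathrm{co}}\,\Gamma_x(\mathcal{I})$, separating $z$ from this compact convex set produces a continuous linear functional $f$ and a scalar $\gamma$ with $\max_{\ell \in \Gamma_x(\mathcal{I})} f(\ell) < \gamma < f(z)$, the maximum existing by compactness of $\Gamma_x(\mathcal{I})$. It then suffices to show $E := \{n : f(x_n) < \gamma\} \in \mathcal{I}^\star$: for such $E$ one has $\{x_n : n \in E\} \subseteq \{w : f(w) < \gamma\}$, so $\overline{\mathrm{co}}\,\{x_n : n \in E\} \subseteq \{w : f(w) \le \gamma\}$, a closed half-space missing $z$, whence $z \notin \mathrm{core}_x(\mathcal{I})$. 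Thus the problem reduces to the scalar claim that the real sequence $f(x_n)$ has $\mathcal{I}$-limit superior at most $\max_{\Gamma_x(\mathcal{I})} f$.

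The main obstacle is exactly this claim, namely $B := \{n : f(x_n) \ge \gamma\} \in \mathcal{I}$. I would argue by contradiction: if $B \notin \mathcal{I}$ then $B' := B \cap E_0 \notin \mathcal{I}$, and since $B' \cap E \notin \mathcal{I}$ (in particular nonempty) for every $E \in \mathcal{I}^\star$, the closed sets $\overline{\{x_n : n \in B' \cap E\}} \cap K$ form a family with the finite intersection property inside the compact set $K$. Any point $\ell$ in their intersection satisfies $f(\ell) \ge \gamma$ (being a limit of values of $f$ that are $\ge \gamma$), and by the neighborhood argument used above it is an $\mathcal{I}$-cluster point of $x$, contradicting $f(\ell) \le \max_{\Gamma_x(\mathcal{I})} f < \gamma$. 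Here compactness is precisely what turns the finite-intersection extraction into a genuine cluster point, while first countability underlies the cluster-point facts quoted from the text; the separation theorem is what translates the geometric statement about convex hulls into the tractable scalar statement about $f(x_n)$.
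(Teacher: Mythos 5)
Your proof is correct, and while it shares the paper's main engine (Hahn--Banach separation plus compactness of $K=\overline{\{x_n:n\in E_0\}}$), the supporting machinery is genuinely different in both halves. For the easy inclusion the paper never separates: its Lemma \ref{lem:firstinclusioncore} quotes the identity $\Gamma_x(\mathcal{I})=\bigcap_{E\in\mathcal{I}^\star}\overline{\{x_n:n\in E\}}$ and takes closed convex hulls, an argument needing first countability (per the citation) but not local convexity; your second separation argument trades those hypotheses the other way. For the hard inclusion, after separating $\ell$ from $\overline{\mathrm{co}}\,\Gamma_x(\mathcal{I})$, the paper applies Engelking's finite-intersection corollary to the family of tail closures $\{\overline{\{x_n:n\in E\}}: E\in\mathcal{I}^\star\}$ (whose total intersection is $\Gamma_x(\mathcal{I})$, inside the open half-space) to get finitely many $E_1,\ldots,E_k$, and then uses the filter property to form the single set $E_0=E_1\cap\cdots\cap E_k\in\mathcal{I}^\star$ whose convex hull misses $\ell$; you instead exhibit the explicit witness $E=\{n: f(x_n)<\gamma\}$ and prove $E\in\mathcal{I}^\star$ by an FIP extraction of an $\mathcal{I}$-cluster point in $f^{-1}([\gamma,\infty))\cap K$ --- in effect re-proving the known fact (which the paper itself invokes, via the cited Lemma 3.1.vi of Leonetti--Maccheroni, in the proof of Lemma \ref{lem:anothercharactIcore}) that an $\mathcal{I}$-positive set of indices mapping into a compact set yields an $\mathcal{I}$-cluster point there. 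The trade-off: the paper's proof is shorter given its cited lemmas, while yours is self-contained; moreover, contrary to your closing remark, your route never actually uses first countability --- closedness of $\Gamma_x(\mathcal{I})$, the inclusion $\Gamma_x(\mathcal{I})\subseteq K$, the inclusion $\bigcap_{E\in\mathcal{I}^\star}\overline{\{x_n:n\in E\}}\subseteq\Gamma_x(\mathcal{I})$ behind your nonemptiness and cluster-point steps all hold in arbitrary topological spaces --- so your argument proves the theorem in every Hausdorff locally convex space, a genuine (if mild) strengthening. One small slip: you call $\overline{\mathrm{co}}\,\Gamma_x(\mathcal{I})$ compact, which can fail outside quasi-complete spaces (only $\Gamma_x(\mathcal{I})$ itself is guaranteed compact); but separation needs only a nonempty closed convex set, and your maximum is taken over $\Gamma_x(\mathcal{I})$, so nothing breaks.
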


Note that the hypothesis that the closure of $\{x_n:n \in E\}$ is compact for some $E \in \mathcal{I}^\star$ cannot be omitted from Theorem \ref{thm:charactcore}. Indeed, consider the following example: let $x$ be the real sequence defined by $x_n=(-1)^n n$ for all $n$. Then $\mathrm{core}_x(\mathcal{Z})=\mathbf{R}$ and $\overline{\mathrm{co}}\,\Gamma_x(\mathcal{Z})=\emptyset$.

We recall that a sequence $x$ in $\mathbf{R}^k$ is said to be $\mathcal{I}$-bounded whenever there exists a constant $c$ such that $\{n \in \omega: \|x_n\|>c\} \in \mathcal{I}$. Accordingly:
\begin{cor}\label{corIcounded}
Let $x$ be an $\mathcal{I}$-bounded sequence in $\mathbf{R}^k$. Then 
$$
\mathrm{core}_x(\mathcal{I})=\mathrm{co}\,\Gamma_x(\mathcal{I}).
$$
Hence $\ell$ belongs to the $\mathcal{I}$-core if and only if $\ell \in \mathrm{co}\{z_1,\ldots,z_{k+1}\}$ for some $\mathcal{I}$-cluster points $z_1,\ldots,z_{k+1}$.
\end{cor}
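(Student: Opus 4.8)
The plan is to deduce Corollary \ref{corIcounded} from Theorem \ref{thm:charactcore} together with two standard finite-dimensional facts: that $\Gamma_x(\mathcal{I})$ is compact when $x$ is $\mathcal{I}$-bounded, and that the convex hull of a compact set in $\mathbf{R}^k$ is already closed. First I would verify the hypothesis of Theorem \ref{thm:charactcore}. Since $\mathbf{R}^k$ is a first countable locally convex topological vector space, it remains to exhibit some $E \in \mathcal{I}^\star$ for which $\overline{\{x_n : n \in E\}}$ is compact. By $\mathcal{I}$-boundedness there is a constant $c$ with $A := \{n : \|x_n\| > c\} \in \mathcal{I}$, so $E := A^c \in \mathcal{I}^\star$ and $\{x_n : n \in E\}$ is contained in the closed ball of radius $c$, whence its closure is compact by Heine--Borel. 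Theorem \ref{thm:charactcore} then applies and gives $\mathrm{core}_x(\mathcal{I}) = \overline{\mathrm{co}}\,\Gamma_x(\mathcal{I})$.

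Next I would upgrade the closed convex hull to the plain convex hull. The set $\Gamma_x(\mathcal{I})$ is closed (as recalled in the introduction), and the same containment $\{x_n : n \in E\} \subseteq \overline{B}(0,c)$ shows every $\mathcal{I}$-cluster point of $x$ lies in that compact ball, so $\Gamma_x(\mathcal{I})$ is a closed subset of a compact set, hence compact. The key classical input is then \emph{Carath\'eodory's theorem}: in $\mathbf{R}^k$ the convex hull of a compact set is compact, and in particular closed. Therefore $\overline{\mathrm{co}}\,\Gamma_x(\mathcal{I}) = \mathrm{co}\,\Gamma_x(\mathcal{I})$, which combined with the previous paragraph yields the displayed identity $\mathrm{core}_x(\mathcal{I}) = \mathrm{co}\,\Gamma_x(\mathcal{I})$.

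Finally, the membership criterion is exactly the quantitative form of Carath\'eodory's theorem. It asserts that every point of $\mathrm{co}\,S$ for $S \subseteq \mathbf{R}^k$ is a convex combination of at most $k+1$ points of $S$. Applying this with $S = \Gamma_x(\mathcal{I})$ shows $\ell \in \mathrm{core}_x(\mathcal{I})$ if and only if $\ell \in \mathrm{co}\{z_1,\ldots,z_{k+1}\}$ for some $\mathcal{I}$-cluster points $z_1,\ldots,z_{k+1}$, where repetitions among the $z_i$ are allowed so that fewer than $k+1$ points are also covered.

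I do not expect a genuine obstacle here, since the corollary is essentially a specialization of the main theorem to the finite-dimensional bounded setting. The only point requiring mild care is the verification that $\Gamma_x(\mathcal{I})$ is compact rather than merely closed, which is precisely what $\mathcal{I}$-boundedness buys us and what makes Carath\'eodory's theorem applicable; without compactness the convex hull of a closed set in $\mathbf{R}^k$ need not be closed, and the passage from $\overline{\mathrm{co}}$ to $\mathrm{co}$ would fail.
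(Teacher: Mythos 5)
Your proposal is correct and follows essentially the same route as the paper: invoke Theorem \ref{thm:charactcore} (after checking its compactness hypothesis via $\mathcal{I}$-boundedness and Heine--Borel), use that the convex hull of the compact set $\Gamma_x(\mathcal{I})$ in $\mathbf{R}^k$ is already closed to drop the closure, and then apply Carath\'eodory's theorem for the $(k+1)$-point membership criterion. The only difference is that you spell out the intermediate verifications (compactness of $\Gamma_x(\mathcal{I})$, and deriving closedness of the convex hull from Carath\'eodory) which the paper cites as known facts.
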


Moreover, 
we have the following representation:
\begin{cor}\label{cor:charactcorekrein} 
Let $x$ be a sequence in a Banach space such that $\{n: x_n \notin K\} \in \mathcal{I}$ for some compact $K$. 
Then, $\ell \in \mathrm{core}_x(\mathcal{I})$ if and only if there exists a Borel regular probability measure $\mu$ supported on $\Gamma_x(\mathcal{I})$ such that
$$
\ell=\int_{\Gamma_x(\mathcal{I})} x\,\mu(\mathrm{d}x).
$$
\end{cor}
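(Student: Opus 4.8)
The plan is to prove Corollary~\ref{cor:charactcorekrein} by combining Theorem~\ref{thm:charactcore} with a version of the Choquet--Bishop--de Leeuw representation theorem (or, since the barycenter map is what matters here, the classical theory of barycenters of probability measures on a compact convex set). The hypothesis $\{n: x_n \notin K\} \in \mathcal{I}$ for some compact $K$ guarantees that $\{x_n : n \in E\}$ has compact closure for $E = \{n : x_n \in K\} \in \mathcal{I}^\star$, so Theorem~\ref{thm:charactcore} applies and gives $\mathrm{core}_x(\mathcal{I}) = \overline{\mathrm{co}}\,\Gamma_x(\mathcal{I})$. Moreover $\Gamma_x(\mathcal{I})$ is a closed subset of the compact set $K$ (cluster points necessarily lie in the closure of a tail we can take inside $K$), hence $\Gamma_x(\mathcal{I})$ is itself compact, and therefore $C := \overline{\mathrm{co}}\,\Gamma_x(\mathcal{I})$ is a compact convex subset of the Banach space by Mazur's theorem. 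This reduces the statement to the following clean assertion: a point $\ell$ lies in $C = \overline{\mathrm{co}}\,\Gamma_x(\mathcal{I})$ if and only if $\ell$ is the barycenter of some Borel regular probability measure supported on $\Gamma_x(\mathcal{I})$.

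First I would establish the easy direction. If $\mu$ is a Borel regular probability measure supported on the compact set $S := \Gamma_x(\mathcal{I})$ and $\ell = \int_S x\,\mu(\mathrm{d}x)$ is its barycenter, then $\ell \in \overline{\mathrm{co}}\, S = C$: indeed the barycenter of a probability measure on a compact set always lies in the closed convex hull of its support, which is a standard fact proved by a Hahn--Banach separation argument (if $\ell \notin C$, separate by a continuous linear functional $f$ with $f(\ell) > \sup_{s \in S} f(s)$, contradicting $f(\ell) = \int_S f\,\mathrm{d}\mu \le \sup_{s \in S} f(s)$). This gives the "if" part immediately.

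For the converse, I would argue that every point of $C = \overline{\mathrm{co}}\, S$ is a barycenter of a probability measure on $S$. The cleanest route is to invoke the standard barycenter theorem for compact convex sets in a locally convex space: for a compact convex set $C$ and a point $\ell \in C$, there is a Radon probability measure on $C$ with barycenter $\ell$ supported on the closure of the extreme points, and since every extreme point of $\overline{\mathrm{co}}\,S$ with $S$ compact lies in $S$ (by Milman's converse to the Krein--Milman theorem, $\mathrm{ext}\,C \subseteq S$), the representing measure can be taken to be supported on $S$. Concretely: the set of barycenters of probability measures supported on $S$ is a convex subset of $C$ that contains $S$ (via Dirac masses $\delta_s$, $s \in S$) and is closed (using weak-$\ast$ compactness of the set of probability measures on the compact metrizable space $S$, together with continuity of the barycenter map); hence it contains $\overline{\mathrm{co}}\,S = C$.

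The main obstacle, and the step requiring the most care, is the compactness and closedness argument underlying the converse: I must ensure the set $\mathscr{B} := \{\text{barycenters of Borel probability measures on } S\}$ is genuinely closed in $C$, which uses the weak-$\ast$ compactness of the probability measures on the compact set $S$ (metrizability of $S$ is relevant here, and $S\subseteq K$ with $K$ compact in a Banach space yields metrizability of the weak-$\ast$ topology on measures when $S$ is separable and metrizable) and the weak-$\ast$-to-norm behaviour of the barycenter map. One has to check that the barycenter $\int_S x\,\mu_\alpha(\mathrm{d}x) \to \int_S x\,\mu(\mathrm{d}x)$ whenever $\mu_\alpha \to \mu$ weak-$\ast$; testing against an arbitrary continuous linear functional $f$ reduces this to $\int_S f\,\mathrm{d}\mu_\alpha \to \int_S f\,\mathrm{d}\mu$, which is exactly weak-$\ast$ convergence applied to the continuous function $f|_S$, and this identifies the limit barycenter uniquely by Hahn--Banach. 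I would then conclude by combining the two inclusions, $\mathscr{B} \subseteq C$ from the easy direction and $C = \overline{\mathrm{co}}\,S \subseteq \mathscr{B}$ from closedness and convexity of $\mathscr{B}$, to obtain $\mathscr{B} = C = \mathrm{core}_x(\mathcal{I})$, which is precisely the claimed equivalence.
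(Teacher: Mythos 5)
Your proposal is correct, and the argument you actually develop for the hard direction is genuinely different from the paper's. The paper proves that every $\ell \in \mathrm{core}_x(\mathcal{I})$ admits a representing measure by first noting $\mathrm{core}_x(\mathcal{I})$ is nonempty, convex and compact, then invoking Krein--Milman, Choquet's representation theorem (to get a measure supported on $\mathrm{ex}\,\overline{\mathrm{co}}\,\Gamma_x(\mathcal{I})$), and finally Milman's converse to push that support inside $\Gamma_x(\mathcal{I})$ --- this is exactly the route you mention as your ``cleanest'' option. But the argument you work out in detail instead shows directly that the set $\mathscr{B}$ of barycenters of Borel probability measures on $S:=\Gamma_x(\mathcal{I})$ is convex, contains $S$ (Dirac masses), and is norm-closed --- via weak-$\ast$ compactness of the probability measures on the compact metric space $S$ (Riesz plus Banach--Alaoglu), weak-$\ast$-to-weak continuity of the barycenter map, and uniqueness of weak limits --- so that $\mathscr{B} \supseteq \overline{\mathrm{co}}\,S = \mathrm{core}_x(\mathcal{I})$ by Theorem \ref{thm:charactcore}; the easy inclusion $\mathscr{B} \subseteq \overline{\mathrm{co}}\,S$ is handled the same way in both proofs (the paper cites \cite[Theorem 3.27]{MR1157815}, you reprove it by Hahn--Banach separation). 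The trade-off: the paper's proof is shorter given the quoted machinery and yields the stronger conclusion that the representing measure lives on the extreme points of the core; yours dispenses with Choquet theory and Milman's theorem entirely, using only Alaoglu, Riesz and Hahn--Banach, and in fact shows $\mathscr{B}$ is weakly compact. One point you should make explicit when writing it up: the barycenter of the weak-$\ast$ limit measure $\mu$ exists and lies in $\overline{\mathrm{co}}\,S$ because $\overline{\mathrm{co}}\,S$ is compact (Mazur's theorem, i.e. \cite[Theorem 3.20.c]{MR1157815}); this existence statement, which you use implicitly, is where completeness of the Banach space enters.
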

%We remark that, since $\Gamma_x(\mathcal{I})$ is compact, then $E\subseteq \Gamma_x(\mathcal{I})$ by Milman's theorem \cite[Theorem 3.25]{MR1157815}.

Lastly, if $X$ is an Hilbert space, we obtain the following extension of \eqref{eq:firstcharacterizationold}:
\begin{thm}\label{thm:anothercharactIcoremain}
%Let $\mathcal{I}$ be an ideal and 
Let 
$x$ be a sequence in a Hilbert space $X$ such that the closure of $\{x_n:n \in E\}$ is compact for some $E \in \mathcal{I}^\star$. Then 
$$%\begin{equation}\label{eq:claimsecondcharacterizationcore}
\mathrm{core}_x(\mathcal{I})=\bigcap_{y \in X}F_{x,y}(\mathcal{I})
$$%\end{equation}
where $F_{x,y}(\mathcal{I}):=\{h \in X: \|h-y\|\le \max \Gamma_{\|x_n-y\|}(\mathcal{I})\}$ for all $y \in X$.
%, and $\|\cdot\|$ is the norm induced by the inner product.
\end{thm}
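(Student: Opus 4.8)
The plan is to reduce the statement to a purely geometric fact about compact sets in a Hilbert space. Since a Hilbert space is a first countable locally convex topological vector space and the compactness hypothesis is exactly the one of Theorem~\ref{thm:charactcore}, that theorem gives $\mathrm{core}_x(\mathcal{I})=\overline{\mathrm{co}}\,\Gamma_x(\mathcal{I})$. Write $L:=\Gamma_x(\mathcal{I})$, $K:=\overline{\{x_n:n\in E\}}$, and $C:=\overline{\mathrm{co}}\,L$. A finite-subcover argument (the same one showing cluster sets are nonempty) yields that $L$ is a nonempty compact subset of $K$, so $C$ is compact and convex. The first task is to establish, for every $y\in X$, the identity
\[
\max\Gamma_{\|x_n-y\|}(\mathcal{I})=\max_{\ell\in L}\|\ell-y\|,
\]
which turns $F_{x,y}(\mathcal{I})$ into the closed ball $\{h:\|h-y\|\le\max_{\ell\in L}\|\ell-y\|\}$. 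After this, the theorem becomes the assertion that $C$ equals the intersection over $y$ of these balls.

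For the displayed identity I would treat the two inequalities separately. First note that $\|x_n-y\|$ is $\mathcal{I}$-bounded on $E$ (for $n\in E$ one has $\|x_n-y\|\le\sup_{z\in K}\|z-y\|<\infty$), so $r:=\max\Gamma_{\|x_n-y\|}(\mathcal{I})$ is finite and attained. If $\ell\in L$ and $\varepsilon>0$, then $\{n:\|x_n-\ell\|<\varepsilon\}\notin\mathcal{I}$ is contained in $\{n:\bigl|\,\|x_n-y\|-\|\ell-y\|\,\bigr|<\varepsilon\}$, whence $\|\ell-y\|\in\Gamma_{\|x_n-y\|}(\mathcal{I})$ and so $\|\ell-y\|\le r$. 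Conversely, for each $m$ the shell $A_m:=\{n\in E:\bigl|\,\|x_n-y\|-r\,\bigr|<1/m\}$ lies outside $\mathcal{I}$; applying the standard lemma that a set outside $\mathcal{I}$ with relatively compact range carries an $\mathcal{I}$-cluster point of $x$ in its closure produces $\ell_m\in L$ with $\bigl|\,\|\ell_m-y\|-r\,\bigr|\le1/m$, and a limit point of $(\ell_m)$ in the compact set $L$ realizes $r=\|\ell-y\|$. This continuous-image step is exactly where the relative-compactness hypothesis is used.

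It then remains to prove $C=\bigcap_{y\in X}\{h:\|h-y\|\le\max_{\ell\in L}\|\ell-y\|\}$. The inclusion $\subseteq$ holds in any normed space: for $h=\sum_i\lambda_i\ell_i$ with $\ell_i\in L$ one has $\|h-y\|\le\sum_i\lambda_i\|\ell_i-y\|\le\max_{\ell\in L}\|\ell-y\|$, and this bound passes to the closure $C$. The reverse inclusion is the crux, and is where the Hilbert structure enters. Given $h\notin C$, let $p$ be the metric projection of $h$ onto the closed convex set $C$, so that $\langle h-p,\,c-p\rangle\le0$ for all $c\in C$, and set $y:=p+t(p-h)$ for a large $t>0$. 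Expanding $\|c-y\|^2$ and invoking the projection inequality gives $\|h-y\|^2-\|c-y\|^2\ge(2t+1)\|h-p\|^2-\|c-p\|^2$ for every $c\in C$; since $C$ is bounded and $\|h-p\|>0$, a large enough $t$ makes this positive uniformly in $c$, so $\|h-y\|>\|c-y\|$ for all $c\in C$, in particular $\|h-y\|>\max_{\ell\in L}\|\ell-y\|$. Hence $h\notin F_{x,y}(\mathcal{I})$.

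I expect the main obstacle to be twofold. The nontrivial half of the cluster-point identity rests on the relative-compactness lemma and on passing to a limit inside $L$, which must be set up carefully. More essentially, the reverse geometric inclusion separates $h$ from $C$ not by a functional but by a single farthest point, and this is genuinely a Hilbert-space phenomenon: in a general Banach space one obtains only hyperplane separation, and the exact ball-shaped form of $F_{x,y}(\mathcal{I})$ would no longer capture the core. The projection-theorem computation is therefore the part that both drives the proof and pins down why the hypothesis ``Hilbert'' cannot be weakened to ``Banach''.
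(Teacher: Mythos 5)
Your proof is correct, but it takes a genuinely different route in the half that carries the difficulty. The easy inclusion $\mathrm{core}_x(\mathcal{I})\subseteq\bigcap_{y}F_{x,y}(\mathcal{I})$ is handled essentially the same way in both arguments: the triangle inequality shows each $\mathcal{I}$-cluster point lies in $F_{x,y}(\mathcal{I})$, and convexity plus Theorem \ref{thm:charactcore} finishes (this is the content of Lemma \ref{lem:anothercharactIcore}, which also verifies, via the compactness lemma, that $\max\Gamma_{\|x_n-y\|}(\mathcal{I})$ exists --- a point you assert but only implicitly justify). For the reverse inclusion, however, the paper never forms your identity $\max\Gamma_{\|x_n-y\|}(\mathcal{I})=\max_{\ell\in\Gamma_x(\mathcal{I})}\|\ell-y\|$: it fixes $E\in\mathcal{I}^\star$ and $\ell\notin\overline{\mathrm{co}}\,\{x_n:n\in E\}$ (normalized to $\ell=0$), separates $0$ from the compact closure $K$ of the range by a Hahn--Banach hyperplane $H$, takes the minimal-norm point $h_0\in H$, and shows by Pythagoras that for large $n_0$ the closed ball centered at $y=n_0h_0$ of radius $\sqrt{(n_0-1)^2\|h_0\|^2+\rho^2}$ contains $K$ but not $0$; since (after the harmless modification placing all $x_n$ in $K$) the whole sequence lies in that ball, the trivial bound $\max\Gamma_{\|x_n-y\|}(\mathcal{I})\le\sqrt{(n_0-1)^2\|h_0\|^2+\rho^2}<\|y\|$ suffices, so only the ``easy'' direction of your identity is ever needed. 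You instead separate $h$ from $\overline{\mathrm{co}}\,\Gamma_x(\mathcal{I})$ itself, using the metric projection $p$ and the variational inequality $\langle h-p,c-p\rangle\le0$ to push $y$ far along the ray from $h$ through $p$; because your ball then controls only the cluster points rather than the whole range, you genuinely need the nontrivial direction of the identity, which you correctly supply via the shells $A_m$ and the same compactness lemma. The trade-off: the paper's construction stays at the level of the sets $\overline{\mathrm{co}}\,\{x_n:n\in E\}$ defining the core, which is the formulation relevant to its open question about Banach spaces, and it needs less machinery about cluster points; your version isolates a quotable intermediate fact --- $F_{x,y}(\mathcal{I})$ is exactlyly the closed ball about $y$ of radius $\max_{\ell\in\Gamma_x(\mathcal{I})}\|\ell-y\|$ --- and reduces the theorem to a clean, purely geometric statement (a compact convex set in a Hilbert space is the intersection of these balls), with the projection computation pinpointing precisely where Hilbert geometry, rather than mere local convexity, is used.
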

%Identity \eqref{eq:firstcharacterizationold} corresponds to the case $X=\mathbf{C}$ and $\mathcal{I}=\mathrm{Fin}$.
%The answer is positive if $X=\mathbf{C}$ and $\mathcal{I}=\mathrm{Fin}$, see \cite{MR0487146}. \textcolor{red}{[Delete]}
We leave as open question for the interested reader to establish whether Theorem \ref{thm:anothercharactIcoremain} holds in Banach space.

Proofs follow in Section \ref{sec:proofs}. Applications are given in Section \ref{sec:applications}.

%%%%%%%%%%%%%%%%%%%%%%%%%%%%%%%%%%%%%%%%%%%%%%%%%%%%%%%%%%%%%%%%%%

\section{Proofs}\label{sec:proofs}%of main results

Let us start with a preliminary lemma.
\begin{lem}\label{lem:firstinclusioncore}
%Let $\mathcal{I}$ be an ideal and 
Let 
$x$ be a sequence in a topological vector space with a countable local base at $0$. Then $\overline{\mathrm{co}}\,\Gamma_x(\mathcal{I})\subseteq \mathrm{core}_x(\mathcal{I})$.
\end{lem}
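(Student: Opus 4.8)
The plan is to reduce the statement to a pointwise claim about individual $\mathcal{I}$-cluster points. Since $\mathrm{core}_x(\mathcal{I})=\bigcap_{E \in \mathcal{I}^\star}\overline{\mathrm{co}}\,\{x_n:n\in E\}$ is an intersection of closed convex sets, it is itself closed and convex; and for any subset $A$ of a closed convex set $C$ one has $\overline{\mathrm{co}}\,A\subseteq C$. Hence it suffices to prove the plain inclusion $\Gamma_x(\mathcal{I})\subseteq \mathrm{core}_x(\mathcal{I})$, after which taking closed convex hulls on both sides yields $\overline{\mathrm{co}}\,\Gamma_x(\mathcal{I})\subseteq \mathrm{core}_x(\mathcal{I})$.

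To establish $\Gamma_x(\mathcal{I})\subseteq \mathrm{core}_x(\mathcal{I})$, I would fix an arbitrary $\mathcal{I}$-cluster point $\ell \in \Gamma_x(\mathcal{I})$ together with an arbitrary $E \in \mathcal{I}^\star$, and show the stronger statement that $\ell$ lies already in the closure $\overline{\{x_n:n\in E\}}$ (which is contained in $\overline{\mathrm{co}}\,\{x_n:n\in E\}$). The argument is by contradiction: if $\ell \notin \overline{\{x_n:n\in E\}}$, there is an open neighborhood $U$ of $\ell$ with $U\cap\{x_n:n\in E\}=\emptyset$, so that $\{n:x_n\in U\}\subseteq E^c$. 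Since $E \in \mathcal{I}^\star$ means $E^c\in \mathcal{I}$, heredity of $\mathcal{I}$ gives $\{n:x_n\in U\}\in \mathcal{I}$, contradicting the defining property of an $\mathcal{I}$-cluster point. Thus $\ell \in \overline{\mathrm{co}}\,\{x_n:n\in E\}$ for every $E \in \mathcal{I}^\star$, i.e. $\ell \in \mathrm{core}_x(\mathcal{I})$, and as $\ell$ was arbitrary the inclusion follows.

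I expect essentially no serious obstacle here; the only points requiring care are the duality $E \in \mathcal{I}^\star \Leftrightarrow E^c \in \mathcal{I}$ and the correct logical direction of the neighborhood argument (the universal quantifier in the definition of cluster point versus the existential one supplied by failure of closure membership). The countable local base hypothesis is not actually invoked in this direction of the proof—the closure argument is valid in any topological vector space—but I would retain it to keep the lemma aligned with the standing hypotheses of Theorem \ref{thm:charactcore}, where first countability is genuinely needed for the reverse inclusion.
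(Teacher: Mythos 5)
Your proof is correct, and its skeleton matches the paper's: both reduce the statement to the pointwise inclusion $\Gamma_x(\mathcal{I})\subseteq \overline{\{x_n:n\in E\}}\subseteq \overline{\mathrm{co}}\,\{x_n:n\in E\}$ for each $E\in\mathcal{I}^\star$, and then pass to closed convex hulls using that $\mathrm{core}_x(\mathcal{I})$ is itself closed and convex. The difference lies in how that pointwise inclusion is obtained. The paper invokes the equality $\Gamma_x(\mathcal{I})=\bigcap_{E\in\mathcal{I}^\star}\overline{\{x_n:n\in E\}}$ from \cite[Theorem 4.2]{LMxyz}, which is exactly where the countable-local-base hypothesis enters; you instead prove the one inclusion that is actually needed directly from the definitions, via the neighborhood/heredity argument (if $U$ is a neighborhood of $\ell$ disjoint from $\{x_n:n\in E\}$, then $\{n:x_n\in U\}\subseteq E^c\in\mathcal{I}$, contradicting $\ell\in\Gamma_x(\mathcal{I})$), and this argument is sound. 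Your closing observation is also correct and is the genuine added value of your route: only the ``easy'' direction of the cited equality is used, so the lemma holds in an arbitrary topological vector space, with first countability needed only for the reverse inclusion established in Theorem \ref{thm:charactcore}. In short, the paper's proof buys brevity by outsourcing to \cite{LMxyz}; yours buys self-containedness and a strictly weaker hypothesis at the cost of a few extra lines.
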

\begin{proof}
Since $X$ is first countable, it follows by \cite[Theorem 4.2]{LMxyz} that 
$$
\Gamma_x(\mathcal{I})=\bigcap_{E \in \mathcal{I}^\star} \overline{\{x_n: n\in E\}}.
$$
In addition, $\overline{\{x_n:n \in E\}}\subseteq \overline{\mathrm{co}}\,\{x_n:n\in E\}$ for all $E \in \mathcal{I}^\star$, hence $\Gamma_x(\mathcal{I})\subseteq \mathrm{core}_x(\mathcal{I})$. Therefore $\overline{\mathrm{co}}\,\Gamma_x(\mathcal{I})\subseteq \overline{\mathrm{co}}\,\mathrm{core}_x(\mathcal{I})=\mathrm{core}_x(\mathcal{I})$.
\end{proof}

We can proceed to the proof of our main result.
\begin{proof}[Proof of Theorem \ref{thm:charactcore}]
The inclusion $\overline{\mathrm{co}}\,\Gamma_x(\mathcal{I})\subseteq \mathrm{core}_x(\mathcal{I})$ follows by Lemma \ref{lem:firstinclusioncore}. 
At this point, we have only to show that
\begin{equation}\label{eq:claimreverseinclusioncore}
\bigcap_{E \in \mathcal{I}^\star}\overline{\mathrm{co}}\,\{x_n:n\in E\} \subseteq \overline{\mathrm{co}}\,\Gamma_x(\mathcal{I}).%.\left(\bigcap_{E \in \mathcal{I}^\star} \overline{\{x_n: n\in E\}}\right).
\end{equation}
To this aim, fix $\ell \in X\setminus \overline{\mathrm{co}}\,\Gamma_x(\mathcal{I})$. By the Hahn--Banach separation theorem \cite[Theorem 3.4]{MR1157815}, there exist a continuous linear functional $T: X\to \mathbf{R}$ and $r \in \mathbf{R}$ such that $T(\ell)<r<T(\eta)$ for all $\eta \in \overline{\mathrm{co}}\,\Gamma_x(\mathcal{I})$. It follows that $G:=T^{-1}((r,\infty))$ is a convex open set such that $\ell \notin G$ and, thanks to \cite[Theorem 4.2]{LMxyz}, 
$$
\bigcap_{E \in \mathcal{I}^\star} \overline{\{x_n: n\in E\}}=\Gamma_x(\mathcal{I})\subseteq \overline{\mathrm{co}}\,\Gamma_x(\mathcal{I})\subseteq G.
$$
In addition, $F:=T^{-1}([r,\infty))$ is a closed convex set such that $\ell \notin F$ and $G\subseteq F$.

Considering that there exists $E \in \mathcal{I}^\star$ such that the closure of $\{x_n: n\in E\}$ is compact, it follows by \cite[Corollary 3.1.5]{MR1039321} that there exist $E_1,\ldots,E_k \in \mathcal{I}^\star$ such that $\bigcap_{i=1}^k \overline{\{x_n: n\in E_i\}}\subseteq G$. Setting $E_0:=E_1\cap \cdots \cap E_k \in \mathcal{I}^\star$, we obtain 
$$
\overline{\{x_n: n \in E_0\}}\subseteq \bigcap_{i=1}^k\overline{\{x_n: n \in E_i\}}\subseteq G\subseteq F.
$$

To sum up, for every $\ell \notin\overline{\mathrm{co}}\,\Gamma_x(\mathcal{I})$ there exists $E_0=E_0(\ell) \in \mathcal{I}^\star$ such that the closed convex hull of $\overline{\{x_n: n \in E_0\}}$ is contained in $F$; in particular,
$$
\overline{\mathrm{co}}\, \{x_n: n \in E_0\} \subseteq \overline{\mathrm{co}}\, \overline{\{x_n: n \in E_0\}} \subseteq F\subseteq X\setminus \{\ell\},
$$ 
hence $\ell \notin \overline{\mathrm{co}}\, \{x_n: n \in E_0\}$. This implies the inclusion \eqref{eq:claimreverseinclusioncore}. 
\end{proof}

\begin{proof}[Proof of Corollary \ref{corIcounded}]
It follows by the standing hypothesis that there exists $E \in \mathcal{I}^\star$ such that the closure of $\{x_n:n \in E\}$ is compact. In addition, it is known that $\overline{\mathrm{co}}\,A=\mathrm{co}\, \overline{A}$ for all compact sets $A\subseteq \mathbf{R}^k$. Therefore, thanks to Theorem \ref{thm:charactcore}, we conclude that $\mathrm{core}_x(\mathcal{I})=\overline{\mathrm{co}}\,\Gamma_x(\mathcal{I})=\mathrm{co}\,\overline{\Gamma_x(\mathcal{I})}=\mathrm{co}\,\Gamma_x(\mathcal{I})$. The second claim follows by Caratheodory's theorem, cf. \cite[p.73]{MR1157815}.
\end{proof}

%%%%%%%%%%%%%%%%%%%%%%%%%%%%%%%%%%%%%%%%%%%

%%%%%%%%%%%%%%%%%%%%%%%%%%%%%%%%%%%%%%%%%%%%%%%%%%%%%

In the following proof, we denote by $\mathrm{ex}\,A$ the set of extreme points of a convex set $A$ in a vector space, that is, the set of all $a\in A$ such that if $a=tx+(1-t)y$ for some $t \in (0,1)$ and $x,y \in A$ then $x=y=a$.
\begin{proof}[Proof of Corollary \ref{cor:charactcorekrein}]
Recall that a Banach space is a locally convex first countable topological vector space and its continuous dual separates points. Now, if $A:=\{n: x_n \notin K\} \in \mathcal{I}$ then $A^c \in \mathcal{I}^\star$, hence the closure of $\{x_n: n \in A^c\}$ is compact. Moreover, $\overline{\mathrm{co}}\,K$ is compact by \cite[Theorem 3.20.c]{MR1157815} and convex by \cite[Theorem 1.13.d]{MR1157815}.

Considering that $\mathrm{core}_x(\mathcal{I})\subseteq \overline{\mathrm{co}}\,K$ and that $\Gamma_x(\mathcal{I})$ is nonempty by \cite[Lemma 3.1.vi]{LMxyz} and contained in the $\mathcal{I}$-core of $x$ by Lemma \ref{lem:firstinclusioncore}, it follows that $\mathrm{core}_x(\mathcal{I})$ is a nonempty convex compact set. Therefore, by the Krein--Milman theorem \cite[Theorem 3.23]{MR1157815} and Theorem \ref{thm:charactcore}, we obtain that
\begin{displaymath}
\mathrm{core}_x(\mathcal{I})=\overline{\mathrm{co}}\,(\mathrm{ex}\,\mathrm{core}_x(\mathcal{I}))=\overline{\mathrm{co}}\,E,
\end{displaymath}
where $E:=\mathrm{ex}\,\overline{\mathrm{co}}\,\Gamma_x(\mathcal{I})$. Lastly, by the Choquet's theorem \cite[Exercise 25, p.89 and p.402]{MR1157815}, for each $\ell \in \mathrm{core}_x(\mathcal{I})$ there exists a Borel regular probability measure $\mu$ supported on $E$ such that
$$
\ell=\int_{E} x\,\mu(\mathrm{d}x).
$$
On the other hand, $E$ is contained in the compact set $\Gamma_x(\mathcal{I})$, thanks to Milman's theorem \cite[Theorem 3.25]{MR1157815}.

Conversely, if $\mu$ is a Borel regular probability measure supported on $\Gamma_x(\mathcal{I})$ then 
$$
\int_{\Gamma_x(\mathcal{I})} x\, \mu(\mathrm{d}x) \in \overline{\mathrm{co}}\,\Gamma_x(\mathcal{I})=\mathrm{core}_x(\mathcal{I}),
$$ 
see \cite[Theorem 3.27]{MR1157815}.
\end{proof}

%%%%%%%%%%%%%%%%%%%%%%%%%%%%%%%%%%%%%%%%%%%%%%%%%%%%%%%%%%%%%%%%%%%%%%%%%%%%%%%%%%%%%%%%%%%%%

Recall that, according to \cite[Theorem 1.24]{MR1157815}, a first countable topological vector space is locally convex if and only if its topology is compatible with an invariant metric for which the open balls are balanced and convex. Denoting by $d$ a compatible metric of this type, we have:

\begin{lem}\label{lem:anothercharactIcore}
With the same hypotheses of Theorem \ref{thm:charactcore} and the notation of Theorem \ref{thm:anothercharactIcoremain}, it holds
$$%\begin{equation}\label{eq:claimsecondcharacterizationcore}
\mathrm{core}_x(\mathcal{I})\subseteq \bigcap_{y \in X}F_{x,y}(\mathcal{I}),
$$%\end{equation}
where $F_{x,y}(\mathcal{I}):=\{h \in X: d(y,h)\le \max \Gamma_{d(x_n,y)}(\mathcal{I})\}$ for all $y \in X$. 
\end{lem}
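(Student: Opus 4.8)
The plan is to invoke Theorem \ref{thm:charactcore} to rewrite $\mathrm{core}_x(\mathcal{I})$ as $\overline{\mathrm{co}}\,\Gamma_x(\mathcal{I})$, and then, for each fixed $y \in X$, to show that $F_{x,y}(\mathcal{I})$ is a closed convex set containing every $\mathcal{I}$-cluster point of $x$. Since a closed convex set that contains $\Gamma_x(\mathcal{I})$ must contain $\overline{\mathrm{co}}\,\Gamma_x(\mathcal{I})$, this gives $\mathrm{core}_x(\mathcal{I}) \subseteq F_{x,y}(\mathcal{I})$ for each $y$, and intersecting over all $y \in X$ finishes the proof. This is the ``easy'' inclusion, so no separation theorem is needed here.

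First I would fix $y \in X$ and check that $M := \max \Gamma_{d(x_n,y)}(\mathcal{I})$ is well defined. By hypothesis there is $E \in \mathcal{I}^\star$ such that $\overline{\{x_n:n\in E\}}$ is compact; the continuous map $d(\cdot,y)$ is bounded on this compact set, so the real sequence $a=(d(x_n,y))_n$ is $\mathcal{I}$-bounded. Consequently $\Gamma_a(\mathcal{I})$ is a nonempty compact subset of $\mathbf{R}$, and its maximum is attained.

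The key step is the observation that continuous maps carry $\mathcal{I}$-cluster points to $\mathcal{I}$-cluster points. Concretely, let $\ell \in \Gamma_x(\mathcal{I})$ and let $V$ be a neighborhood of $d(\ell,y)$ in $\mathbf{R}$. Since $d$ is continuous, $d(\cdot,y)^{-1}(V)$ is a neighborhood of $\ell$ in $X$, and therefore
$$
\{n: a_n \in V\}=\{n: x_n \in d(\cdot,y)^{-1}(V)\}\notin \mathcal{I}.
$$
This shows $d(\ell,y)\in \Gamma_a(\mathcal{I})$, hence $d(\ell,y)\le M$. As $\ell$ was an arbitrary $\mathcal{I}$-cluster point, we conclude $\Gamma_x(\mathcal{I})\subseteq F_{x,y}(\mathcal{I})$.

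Finally I would record that $F_{x,y}(\mathcal{I})=\{h\in X: d(y,h)\le M\}$ is closed, being the preimage of $(-\infty,M]$ under the continuous map $d(y,\cdot)$, and convex, being the intersection $\bigcap_{r>M}\{h: d(y,h)<r\}$ of the sets $y+\{h: d(0,h)<r\}$, which are convex by the chosen invariant metric $d$ (whose balls are balanced and convex). A closed convex set containing $\Gamma_x(\mathcal{I})$ contains $\overline{\mathrm{co}}\,\Gamma_x(\mathcal{I})$, which equals $\mathrm{core}_x(\mathcal{I})$ by Theorem \ref{thm:charactcore}; thus $\mathrm{core}_x(\mathcal{I})\subseteq F_{x,y}(\mathcal{I})$, and intersecting over all $y\in X$ yields the stated inclusion. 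The only points requiring genuine care are the well-definedness of $M$ (finiteness and attainment, handled by $\mathcal{I}$-boundedness) and the convexity and closedness of $F_{x,y}(\mathcal{I})$; the rest is a direct consequence of Theorem \ref{thm:charactcore} together with the routine continuity argument above.
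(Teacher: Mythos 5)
Your proof is correct, and its outer skeleton agrees with the paper's: show that each $F_{x,y}(\mathcal{I})$ is closed, convex, and contains $\Gamma_x(\mathcal{I})$, then conclude from Theorem \ref{thm:charactcore} that it contains $\mathrm{core}_x(\mathcal{I})=\overline{\mathrm{co}}\,\Gamma_x(\mathcal{I})$, and intersect over $y$. Where you genuinely diverge is the key step $\Gamma_x(\mathcal{I})\subseteq F_{x,y}(\mathcal{I})$. The paper argues by contradiction: assuming $d(\ell,y)>\mathrm{m}$, it uses the triangle inequality $d(x_n,y)\ge d(\ell,y)-d(x_n,\ell)$ to show that the (non-ideal) set of indices with $x_n$ near $\ell$ pushes $d(x_n,y)$ into the compact interval $[d(\ell,y)-\varepsilon,\mathrm{c}]$, and then invokes \cite[Lemma 3.1.vi]{LMxyz} to extract an $\mathcal{I}$-cluster point of $(d(x_n,y))_n$ exceeding $\mathrm{m}$, a contradiction. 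You instead prove directly that $d(\ell,y)$ is itself an $\mathcal{I}$-cluster point of $(d(x_n,y))_n$, using nothing but continuity of $d(\cdot,y)$ and the pullback of neighborhoods --- in effect the general principle that a continuous map sends $\mathcal{I}$-cluster points of $x$ into $\mathcal{I}$-cluster points of the image sequence. Your route is shorter, isolates a reusable fact, and confines the compactness hypothesis to its only essential role, namely guaranteeing that $M=\max\Gamma_{d(x_n,y)}(\mathcal{I})$ exists; the paper's version instead stays entirely within the toolkit of \cite{LMxyz} that it cites throughout. Relatedly, your treatment of the well-definedness of $M$ (observing that $(d(x_n,y))_n$ is $\mathcal{I}$-bounded, hence has a nonempty compact set of $\mathcal{I}$-cluster points) replaces the paper's device of redefining $x_n:=\kappa\in K$ for $n\notin E$ and citing \cite[Lemma 3.1.v]{LMxyz}; the two rest on the same underlying fact, so this is a cosmetic rather than substantive difference.
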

\begin{proof}%[Proof of Theorem \ref{thm:anothercharactIcore}]
First, let us show that $\Gamma_{d(x_n,y)}(\mathcal{I})$ admits a maximum so that each $F_{x,y}(\mathcal{I})$ is well defined. To this aim, fix $y \in X$. By hypothesis there exists $E \in \mathcal{I}^\star$ such that the closure of $\{x_n: n \in E\}$, hereafter denoted by $K$, is compact. Then fix some $\kappa \in K$ and define the sequence $z=(z_n)$ by $z_n=x_n$ if $x \in E$ and $z_n=\kappa$ otherwise. Thanks to \cite[Lemma 3.1.v]{LMxyz}, we have $\Gamma_{d(x_n,y)}(\mathcal{I})=\Gamma_{d(z_n,y)}(\mathcal{I})$. Hence, let us suppose without loss of generality that $x_n \in K$ for all $n\in \omega$. 

Considering that the section $d(\cdot,y)$ is continuous and $K \cup \{y\}$ is compact, the real sequence $d(x_n,y)$ is bounded by a constant $\mathrm{c}=\mathrm{c}(x,y)$, hence $\Gamma_{d(x_n,y)}(\mathcal{I})$ is compact and admits a maximum, let us say $\mathrm{m}=\mathrm{m}(x,y)$. 
Note also that $F_{x,y}(\mathcal{I})$ is convex since can be rewritten as $\bigcap_{n}\left\{h: d(y,h)< \mathrm{m}+\nicefrac{1}{n}\right\}$ and each open ball is convex.

At this point, fix also $\ell \in \Gamma_x(\mathcal{I})$. We claim that $\ell \in F_{x,y}(\mathcal{I})$. Indeed, suppose for the sake of contradiction that $\ell\notin F_{x,y}(\mathcal{I})$, that is, $d(\ell,y)>\mathrm{m}$ and set $\varepsilon:=\frac{1}{2}\left(d(\ell,y)-\mathrm{m}\right)>0$. Since $\ell$ is an $\mathcal{I}$-cluster point and $d(x_n,y) \ge d(\ell,y)-d(x_n,\ell)$ for all $n \in \omega$, we obtain
$$
\{n \in \omega: d(x_n,y) \in [d(\ell,y)-\varepsilon,\mathrm{c}]\} \supseteq \{n \in \omega: d(x_n,\ell)\le \varepsilon\} \notin \mathcal{I}.
$$
Considering that $[d(\ell,y)-\varepsilon,\mathrm{c}]$ is compact, it follows by \cite[Lemma 3.1.vi]{LMxyz} that it contains an $\mathcal{I}$-cluster point of the sequence $d(x_n,y)$ (which is, in particular, bigger than $\mathrm{m}$), contradicting that $\Gamma_{d(x_n,y)}(\mathcal{I}) \subseteq [0,\mathrm{m}]$. Therefore $\Gamma_x(\mathcal{I}) \subseteq \bigcap_{y \in X}F_{x,y}(\mathcal{I})$. Since the latter intersection is closed and convex, it follows by Theorem \ref{thm:charactcore} that
$$
\mathrm{core}_x(\mathcal{I})=\overline{\mathrm{co}}\,\Gamma_x(\mathcal{I})\subseteq \bigcap_{y \in X}F_{x,y}(\mathcal{I}),
$$
which completes the proof.
\end{proof}

\begin{proof}[Proof of Theorem \ref{thm:anothercharactIcoremain}]
The inclusion $\mathrm{core}_x(\mathcal{I})\subseteq \bigcap_{y \in X}F_{x,y}(\mathcal{I})$ follows by Lemma \ref{lem:anothercharactIcore}. Conversely, with the same notation of the proof of Lemma \ref{lem:anothercharactIcore}, we have to prove that 
\begin{equation}\label{eq:converseinequality}
\bigcap_{y \in X}F_{x,y}(\mathcal{I}) \subseteq \bigcap_{E \in \mathcal{I}^\star} \overline{\mathrm{co}}\,\{x_n: n \in E\}.
\end{equation}

To this aim, fix $E \in \mathcal{I}^\star$ and $\ell\notin \overline{\mathrm{co}}\,\{x_n: n \in E\}$. Now since the underlying metric is translation-invariant, we can assume without loss of generality that $\ell=0$ and $0\notin K$. Then we need to show that there exists $y \in X$ such that $\ell\notin F_{x,y}(\mathcal{I})$, that is, $d(0,y)=\|y\|>\mathrm{m}(x,y)$. By the Hahn--Banach separation theorem \cite[Theorem 3.4]{MR1157815}, there exist a continuous linear functional $T: X\to \mathbf{R}$ and $r \in \mathbf{R}$ such that $0=T(0)<r<T(k)$ for all $k \in K$. Then 
$$
H:=\{h \in X: T(h)=r\}
$$ 
is a closed convex hyperplane separating $0$ and $K$. By \cite[Theorem 12.3]{MR1157815} there exists a unique $h_0 \in H$ with minimal norm. In addition, $K$ is bounded, hence the projection of $K$ on $H$ is bounded set, i.e., it is contained in an open ball $B$ of center $h_0$ and radius $\rho$. Since $h_0$ is orthogonal to $h-h_0$ for all $h \in H$, it follows that $nh_0$ is orthogonal to $b-h_0$ for all $n \in \omega$ and $b \in \partial B$. 
It follows that 
$$
\|nh_0-b\|^2=\|nh_0-h_0\|^2+\|h_0-b\|^2=(n-1)^2\|h_0\|^2+\rho^2.
$$
In particular, $\|nh_0-b\|<
%(n-\frac{1}{2})\|h_0\|<
\|nh_0\|$ for all sufficiently large $n$ and $b \in \partial B$. Therefore there exists $n_0\in \omega$ such that the closed ball with center $y:=n_0h_0$ and radius $\sqrt{(n_0-1)^2\|h_0\|^2+\rho^2}$ contains $K$, hence also $\overline{\mathrm{co}}\,\overline{\{x_n: n \in E\}}$, and does not contain the origin. Therefore inclusion \eqref{eq:converseinequality} follows.
\end{proof}

\section{Applications}\label{sec:applications}

\subsection{Ideal convergence} We recall that a sequence $x$ taking values in a topological space is said to be $\mathcal{I}$-convergent to $\ell$, shortened with $x_n \to_{\mathcal{I}} \ell$, provided that $\{n \in \omega: x_n \notin U\} \in \mathcal{I}$ for all neighborhoods $U$ of $\ell$. If $x$ is contained modulo $\mathcal{I}$ in a compact set, we have the following characterization, see \cite[Corollary 3.4]{LMxyz}:

\begin{lem}\label{lema:LMxyz}
Let $x$ 
be a sequence in first countable space such that $\{n: x_n \notin K\} \in \mathcal{I}$ for some compact $K$. Then $x_n \to_{\mathcal{I}} \ell$ if and only if $\Gamma_x(\mathcal{I})=\{\ell\}$.
\end{lem}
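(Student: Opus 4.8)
The plan is to prove Lemma~\ref{lema:LMxyz} by establishing both implications, using the fact that $\Gamma_x(\mathcal{I})$ is the set of $\mathcal{I}$-cluster points together with the compactness hypothesis modulo $\mathcal{I}$. Throughout, fix a compact set $K$ with $A:=\{n: x_n \notin K\}\in \mathcal{I}$, so that $A^c \in \mathcal{I}^\star$ and the closure of $\{x_n: n \in A^c\}$ is compact.

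First I would prove the easy direction: if $x_n \to_{\mathcal{I}} \ell$, then $\Gamma_x(\mathcal{I})=\{\ell\}$. To see that $\ell \in \Gamma_x(\mathcal{I})$, note that for any neighborhood $U$ of $\ell$ the set $\{n: x_n \notin U\}$ lies in $\mathcal{I}$ by definition of $\mathcal{I}$-convergence; hence its complement $\{n: x_n \in U\}$ belongs to $\mathcal{I}^\star$ and in particular is not in $\mathcal{I}$, which is exactly the defining property of an $\mathcal{I}$-cluster point. To see there are no other cluster points, suppose $\ell' \neq \ell$. By first countability (indeed any $T_2$ assumption, which we may take as implicit, or else work with disjoint neighborhoods from the topology) choose disjoint neighborhoods $U$ of $\ell$ and $U'$ of $\ell'$. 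Since $\{n: x_n \in U'\}\subseteq \{n: x_n \notin U\}\in \mathcal{I}$, the set $\{n: x_n \in U'\}$ lies in $\mathcal{I}$, so $\ell'$ fails the cluster-point condition and $\ell'\notin \Gamma_x(\mathcal{I})$.

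Next I would prove the harder converse: if $\Gamma_x(\mathcal{I})=\{\ell\}$, then $x_n \to_{\mathcal{I}} \ell$. Fix an arbitrary neighborhood $U$ of $\ell$; I must show $\{n: x_n \notin U\}\in \mathcal{I}$. As in the proof of Lemma~\ref{lem:anothercharactIcore}, I would first reduce to the situation where $x_n \in K$ for all $n$ by modifying $x$ on the index set $A\in \mathcal{I}$, invoking \cite[Lemma 3.1.v]{LMxyz} to ensure this modification does not change $\Gamma_x(\mathcal{I})$. Now suppose for contradiction that $B:=\{n: x_n \notin U\}\notin \mathcal{I}$. Since $x$ takes values in the compact set $K$ after the reduction, the restricted sequence along $B$ still has values in $K$, and by \cite[Lemma 3.1.vi]{LMxyz} the compact set $K\setminus U$ (which is closed in $K$, hence compact) must contain an $\mathcal{I}$-cluster point $\ell''$ of $x$, precisely because $\{n: x_n \in K\setminus U\}\supseteq B \notin \mathcal{I}$. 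But $\ell''\in K\setminus U$ forces $\ell''\neq \ell$, contradicting $\Gamma_x(\mathcal{I})=\{\ell\}$. Hence $B\in \mathcal{I}$, which is the desired $\mathcal{I}$-convergence.

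The main obstacle is the converse direction, and specifically the step that extracts a cluster point inside $K\setminus U$ from the assumption that the ``bad'' index set $B$ is not in $\mathcal{I}$. This is exactly the content of \cite[Lemma 3.1.vi]{LMxyz}, which guarantees that a compact set whose preimage is $\mathcal{I}$-large contains an $\mathcal{I}$-cluster point; the compactness hypothesis modulo $\mathcal{I}$ is what licenses the reduction to values in $K$ and makes this extraction possible. I expect the remaining verifications—that $K\setminus U$ is compact and that the separation argument is valid—to be routine, so the whole lemma follows at once from the cited structural facts about $\mathcal{I}$-cluster points.
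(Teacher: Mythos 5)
Your proof is correct, but it is worth noting that the paper itself gives no proof of this lemma at all: it is quoted as a known result, namely \cite[Corollary 3.4]{LMxyz}, so what you have produced is a reconstruction rather than a parallel of an argument in the text. Your reconstruction uses exactly the two structural facts from that reference that the paper itself invokes elsewhere (in the proofs of Lemma~\ref{lem:anothercharactIcore} and Corollary~\ref{cor:charactcorekrein}): \cite[Lemma 3.1.v]{LMxyz} to modify $x$ on the $\mathcal{I}$-small set $\{n: x_n\notin K\}$ without changing $\Gamma_x(\mathcal{I})$, and \cite[Lemma 3.1.vi]{LMxyz} to extract an $\mathcal{I}$-cluster point from the compact set $K\setminus U$ whose index set is $\mathcal{I}$-positive; both are used correctly, and the contradiction in the converse direction is sound. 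Two remarks. First, your forward direction genuinely requires a separation axiom: first countability alone does not yield disjoint neighborhoods (in the indiscrete two-point space a constant sequence $\mathcal{I}$-converges while $\Gamma_x(\mathcal{I})$ is the whole space), so the Hausdorff hypothesis, which is a standing assumption in \cite{LMxyz} and only implicit in the paper's statement, is indispensable; you flag this, though your phrase ``by first countability'' misattributes it before the parenthetical corrects it. Second, your argument never actually uses first countability in either direction (Hausdorffness suffices for uniqueness, and compactness alone drives \cite[Lemma 3.1.vi]{LMxyz}), so your proof is in fact slightly more general than the statement as given---a small bonus of writing the argument out instead of citing it.
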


Thus, as an immediate consequence of Theorem \ref{thm:charactcore} and Lemma \ref{lema:LMxyz}, we obtain:
\begin{prop}\label{cor:uniquecore}
Let 
$x$ be a sequence in a locally convex first countable topological vector space $X$ such that 
$\{n: x_n \notin K\} \in \mathcal{I}$ for some compact $K$. 
Then $x_n \to_{\mathcal{I}} \ell$ if and only if $\mathrm{core}_x(\mathcal{I})=\{\ell\}$.
\end{prop}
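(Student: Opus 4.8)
The plan is to obtain the statement as a direct combination of Theorem~\ref{thm:charactcore} and Lemma~\ref{lema:LMxyz}, so the first task is to check that the single standing hypothesis feeds both results. Writing $A:=\{n: x_n \notin K\}\in \mathcal{I}$, its complement $E:=A^c$ lies in $\mathcal{I}^\star$ and satisfies $\{x_n: n \in E\}\subseteq K$; hence the closure of $\{x_n: n \in E\}$ is a closed subset of the compact set $K$ and is therefore itself compact. This is exactly the hypothesis of Theorem~\ref{thm:charactcore}, which yields $\mathrm{core}_x(\mathcal{I})=\overline{\mathrm{co}}\,\Gamma_x(\mathcal{I})$, and since $X$ is first countable, Lemma~\ref{lema:LMxyz} is also applicable.

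For the forward implication I would assume $x_n \to_{\mathcal{I}} \ell$. By Lemma~\ref{lema:LMxyz} this gives $\Gamma_x(\mathcal{I})=\{\ell\}$, and then the identity above collapses to $\mathrm{core}_x(\mathcal{I})=\overline{\mathrm{co}}\,\{\ell\}=\{\ell\}$, using that the closed convex hull of a singleton is the singleton itself.

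For the converse I would assume $\mathrm{core}_x(\mathcal{I})=\{\ell\}$, so that $\overline{\mathrm{co}}\,\Gamma_x(\mathcal{I})=\{\ell\}$. The inclusion $\Gamma_x(\mathcal{I})\subseteq \overline{\mathrm{co}}\,\Gamma_x(\mathcal{I})$ forces $\Gamma_x(\mathcal{I})\subseteq\{\ell\}$, while the fact that the closed convex hull of the empty set is empty, together with $\{\ell\}\neq\emptyset$, shows that $\Gamma_x(\mathcal{I})$ is nonempty; hence $\Gamma_x(\mathcal{I})=\{\ell\}$. A final appeal to Lemma~\ref{lema:LMxyz} then returns $x_n\to_{\mathcal{I}}\ell$.

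There is no substantial obstacle here, as the result is essentially a repackaging of the two cited statements. The only points that warrant a moment of care are the verification that the compactness-modulo-$\mathcal{I}$ hypothesis implies the $\mathcal{I}^\star$-compactness hypothesis of Theorem~\ref{thm:charactcore}, and the nonemptiness of $\Gamma_x(\mathcal{I})$ in the converse direction; the latter is automatic from $\overline{\mathrm{co}}\,\emptyset=\emptyset$, or alternatively from \cite[Lemma 3.1.vi]{LMxyz}.
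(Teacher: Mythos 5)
Your proposal is correct and follows exactly the paper's route: the paper states this proposition as an immediate consequence of Theorem~\ref{thm:charactcore} and Lemma~\ref{lema:LMxyz}, which is precisely the combination you use. Your added details (verifying that the compactness-modulo-$\mathcal{I}$ hypothesis yields the $\mathcal{I}^\star$-compactness hypothesis of Theorem~\ref{thm:charactcore}, and the nonemptiness of $\Gamma_x(\mathcal{I})$ in the converse direction) are the right ones to make the ``immediate consequence'' explicit.
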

This has been obtained in \cite[Theorem 3]{MR1416085} for the case $X=\mathbf{R}$ and $\mathcal{I}=\mathcal{Z}$ (in such case, 
$\min \Gamma_x(\mathcal{Z})$ and $\max \Gamma_x(\mathcal{Z})$ 
are usually called statistical limit inferior and statistical limit superior, respectively); for the case of real double sequences see \cite[Theorem 2.6]{MR2209588} and \cite[Theorem 2.6]{MR3267184}, cf. Sections below.

Note that the hypothesis $\{n: x_n \notin K\} \in \mathcal{I}$ for some compact $K$ cannot be removed. Indeed, let $x=(u_1,-u_1,u_2,-u_2,\ldots)$ where $u_k$ is the $k$-th unit vector in $\ell_\infty$ with the supremum norm. Then $x$ is not convergent and, on the other hand, its Knopp core is $\{0\}$. %%%%VEDI ESEMPIO SIMONS P.1 THE CORE
%%%%%%%%%%%%%%%%%%%%%%%%%%%%%%%%%%%%%%%%%%%%%%%%%%%%%%%%%%%%%%%%%%%

\subsection{Pringsheim core and statistical Pringsheim core} A real double sequence $x=(x_{n,m}: n,m \in\omega)$ has \emph{Pringsheim limit} $\ell$ provided that for every $\varepsilon>0$ there exists $k \in \omega$ such that $|x_{n,m}-\ell|<\varepsilon$ for all $n,m\ge k$. Identifying ideals on countable sets with ideals on $\omega$ through a fixed bijection, it is easily seen that this is equivalent to $x \to_{\mathcal{I}_{\mathrm{P}}} \ell$, where $\mathcal{I}_{\mathrm{P}}$ is the ideal defined by
\begin{equation}\label{eq:pringsideal}
\mathcal{I}_{\mathrm{P}}:=\left\{A\subseteq \omega \times \omega: \limsup_{n\to \infty}\, \sup \left\{k: (n,k) \in A\right\}<\infty\right\}.
\end{equation}
Equivalently, $\mathcal{I}_{\mathrm{P}}$ is the ideal on $\omega \times \omega$ containing the complements of $[n,\infty)\times [n,\infty)$ for all $n \in \omega$, i.e., the smallest ideal containing vertical lines and horizontal lines.

Then, Patterson introduced in \cite{MR1733279} the \emph{P-core} of a real double sequence $x$ as the least closed convex set which includes all the points $x_{n,m}$, for $n,m\ge k$. This coincides with Definition \ref{def:maindefinitionIcore} for $X=\mathbf{R}$ and $\mathcal{I}=\mathcal{I}_{\mathrm{P}}$. 
Hence, it follows by Corollary \ref{corIcounded} that:
\begin{prop}\label{cor:pringhsheim}
The P-core of a bounded double sequence $x$ with values in $\mathbf{R}^k$ is the convex hull of $\Gamma_x(\mathcal{I}_{\mathrm{P}})$.
\end{prop}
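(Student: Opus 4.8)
The plan is to deduce Proposition~\ref{cor:pringhsheim} directly from Corollary~\ref{corIcounded} by verifying that its single hypothesis is met. The statement to be proved is that the P-core of a bounded double sequence $x$ with values in $\mathbf{R}^k$ equals $\mathrm{co}\,\Gamma_x(\mathcal{I}_{\mathrm{P}})$. Since the P-core was identified above with $\mathrm{core}_x(\mathcal{I}_{\mathrm{P}})$ (for $X=\mathbf{R}^k$ and the ideal $\mathcal{I}=\mathcal{I}_{\mathrm{P}}$ of~\eqref{eq:pringsideal}), it suffices to check that $x$ is $\mathcal{I}_{\mathrm{P}}$-bounded in the sense recalled before Corollary~\ref{corIcounded}, and then quote that corollary verbatim.

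First I would recall the definition of $\mathcal{I}_{\mathrm{P}}$-boundedness: a sequence $x$ in $\mathbf{R}^k$ is $\mathcal{I}$-bounded if there is a constant $c$ with $\{n:\|x_n\|>c\}\in\mathcal{I}$. Here $x$ is assumed \emph{bounded} in the ordinary sense, so there is a constant $c$ with $\|x_{n,m}\|\le c$ for all $(n,m)$; hence $\{(n,m):\|x_{n,m}\|>c\}=\emptyset\in\mathrm{Fin}\subseteq\mathcal{I}_{\mathrm{P}}$. Thus $x$ is trivially $\mathcal{I}_{\mathrm{P}}$-bounded, and more than that, the whole index set lies in $\mathcal{I}_{\mathrm{P}}^\star$ with compact closure of $\{x_{n,m}\}$. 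This is the only thing that needs verification.

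Second, having confirmed the hypothesis, I would apply Corollary~\ref{corIcounded} to conclude $\mathrm{core}_x(\mathcal{I}_{\mathrm{P}})=\mathrm{co}\,\Gamma_x(\mathcal{I}_{\mathrm{P}})$, which is exactly the asserted identity once one recalls from the paragraph preceding the proposition that Patterson's P-core coincides with $\mathrm{core}_x(\mathcal{I}_{\mathrm{P}})$. There is no genuine obstacle here: the substantive work was carried out in Theorem~\ref{thm:charactcore} and distilled into Corollary~\ref{corIcounded}, and the present proposition is an instantiation for the specific ideal $\mathcal{I}_{\mathrm{P}}$. The only point deserving a sentence of care is confirming that ordinary boundedness of a double sequence implies $\mathcal{I}_{\mathrm{P}}$-boundedness, which follows because $\mathcal{I}_{\mathrm{P}}$ contains $\mathrm{Fin}$ (indeed the empty set), so the exceptional index set may be taken empty.

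In summary, the proof is a one-line reduction: observe that a bounded double sequence is $\mathcal{I}_{\mathrm{P}}$-bounded and invoke Corollary~\ref{corIcounded}. The main ``difficulty'', if any, is purely notational — keeping track of the identification of double sequences indexed by $\omega\times\omega$ with sequences on $\omega$ via the fixed bijection mentioned in the text, so that the constructions of $\mathrm{core}$, $\Gamma$, and $\mathcal{I}$-boundedness transfer faithfully. Once that bookkeeping is acknowledged, the result is immediate.
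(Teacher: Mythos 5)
Your proposal is correct and follows exactly the paper's route: the paper derives Proposition~\ref{cor:pringhsheim} as an immediate consequence of Corollary~\ref{corIcounded}, using the identification of Patterson's P-core with $\mathrm{core}_x(\mathcal{I}_{\mathrm{P}})$. Your explicit verification that ordinary boundedness gives $\mathcal{I}_{\mathrm{P}}$-boundedness (the exceptional set being empty) is the same trivial step the paper leaves implicit.
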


At this point, for each $n,m \in \omega$, let $\mu_{n,m}$ be the uniform probability measure on $\{1,\ldots,n\}\times \{1,\ldots,m\}$ and define the ideal
$$
\mathcal{Z}_{\mathrm{P}}:=\left\{A\subseteq \omega\times \omega: \mu_{n,m}(A) \to_{\mathcal{I}_{\mathrm{P}}} 0\right\}.
$$
Note that $\mathcal{I}_{\mathrm{P}}\subseteq \mathcal{Z}_{\mathrm{P}}$. The notion of convergence of real double sequences $(x_{n,m})$ with respect to the ideal $\mathcal{Z}_{\mathrm{P}}$ has been recently introduced in \cite{MR2019757};  
here, it has been simply defined ``statistical convergence of double sequences''. 

In this regard, the \emph{statistical core} (or, more properly, \emph{statistical P-core}) of a real bounded double sequence $(x_{n,m})$ has been defined in \cite[Definition 3.1]{MR2209588} as the compact interval 
$$
[\min \Gamma_x(\mathcal{Z}_{\mathrm{P}}),\max \Gamma_x(\mathcal{Z}_{\mathrm{P}})].
$$
This coincides with $\mathrm{core}_x(\mathcal{Z}_{\mathrm{P}})$, thanks to Corollary \ref{corIcounded}.

%%%%%%%%%%%%%%%%%%%%%%%%%%%%%%%%%%%%%%%%%%%%%%%%%%%%%%%%%%%%%%%%%%%%%%%%%%%%

\subsection{$e$-core} 
Given ideals $\mathcal{I},\mathcal{J}$ on $\omega$, we let $\mathcal{I}\times \mathcal{J}$ be their \emph{Fubini product}, i.e., the ideal of all sets $A\subseteq \omega \times \omega$ such that 
$$
\{k \in \omega: \{n\in \omega: (k,n) \in A\} \notin \mathcal{J}\} \in \mathcal{I},
$$
cf. \cite[p.8]{MR1711328}. Accordingly, note that the ideal $\mathcal{I}_{\mathrm{P}}$ defined in \eqref{eq:pringsideal} can be written as 
$$
\{A\cup B^\prime: A,B \in \mathrm{Fin} \times \emptyset\},
$$
where $B^\prime:=\{(n,k) \in \omega \times \omega: (k,n) \in B\}$.

A new notion of convergence for double sequences, called $e$\emph{-convergence}, has been studied in \cite[Section 1]{MR3267184}: a real double sequence $x$ is $e$-convergent to $\ell \in \mathbf{R}$ provided that for every $\varepsilon>0$ there exists $m_0 \in \omega$ such that for each $m\ge m_0$ there is $n_0 \in \omega$ for which $|x_{n,m}-\ell| <\varepsilon$ for all $n\ge n_0$. 
%
%Similarly to P-convergence, it 
It is easily seen that $x$ is $e$-convergent to $\ell$ if and only if 
$$
x_{n,m} \to_{\mathcal{I}_{\mathrm{e}}} \ell\,.
$$
Here, the ideal $\mathcal{I}_{\mathrm{e}}$ stands for the transpose of the Fubini product $\mathrm{Fin}\times \mathrm{Fin}$, i.e.,
$$
\mathcal{I}_{\mathrm{e}}:=\{A\subseteq \omega \times \omega: A^T \in \mathrm{Fin}\times \mathrm{Fin}\},
$$
where $A^T:=\{(n,m) \in \omega \times \omega: (m,n) \in A\}$. Notice that $\mathcal{I}_{\mathrm{P}} \subseteq \mathcal{I}_{\mathrm{e}}$, hence $\Gamma_x(\mathcal{I}_{\mathrm{e}}) \subseteq \Gamma_x(\mathcal{I}_{\mathrm{P}})$, 
 %by \cite[Lemma 3.1.(i)]{LMxyz}, 
%and $\mathrm{core}_x(\mathrm{Fin} \times \mathrm{Fin}) \subseteq \mathrm{core}_x(\mathcal{I}_{\mathrm{P}})$, 
which is \cite[Theorem 2.8]{MR3267184}. %In addition, if $y$ is another real double sequence such that $x-y$ is $e$-convergent to $0$, then 

Similarly to the P-core, the $e$\emph{-core} of a real double sequence $x$ is defined in \cite[Definition 2.10]{MR3267184} to be the closed interval 
$
[\inf \Gamma_x(\mathcal{I}_{\mathrm{e}}), \sup \Gamma_x(\mathcal{I}_{\mathrm{e}})].
$ 
Again, thanks to Corollary \ref{corIcounded}, this coincides with $\mathrm{core}_x(\mathcal{I}_{\mathrm{e}})$. 

The following result has been obtained in \cite[Theorem 2.11]{MR3267184} for the case $\mathcal{I}=\mathcal{I}_{\mathrm{e}}$ and bounded real double sequences $x,y$, cf. also \cite[Lemma 2.2]{MR1507948}:
\begin{prop}\label{prop:econvergence}
Let $x,y$ be two sequences in a locally convex first countable topological vector space such that $x-y \to_{\mathcal{I}} 0$ and $\{n: x_n \notin K\} \in \mathcal{I}$ for some compact $K$. Then $\mathrm{core}_x(\mathcal{I})=\mathrm{core}_y(\mathcal{I})$.
\end{prop}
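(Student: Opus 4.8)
The plan is to bypass the cluster-point description and instead characterise membership in the core through the support functionals of the ambient space, since the hypothesis $x-y\to_{\mathcal I}0$ interacts most transparently with continuous linear functionals. Because $X$ is locally convex (and Hausdorff, by the invariant-metric description recalled before Lemma~\ref{lem:anothercharactIcore}), its continuous dual $X^\ast$ separates points, and every closed convex set is the intersection of the closed half-spaces containing it, by the Hahn--Banach separation theorem \cite[Theorem~3.4]{MR1157815}. The key reduction I would prove is the identity
$$
\mathrm{core}_x(\mathcal I)=\bigl\{\ell\in X: T(\ell)\ge s_x(T)\ \text{ for every } T\in X^\ast\bigr\},
\qquad s_x(T):=\sup\{c\in\mathbf R:\{n:T(x_n)<c\}\in\mathcal I\},
$$
with the convention $\sup\emptyset=-\infty$, so that directions $T$ with $s_x(T)=-\infty$ impose no constraint.

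First I would establish this identity. Writing each $\overline{\mathrm{co}}\{x_n:n\in E\}$ as the intersection of the closed half-spaces $\{z:T(z)\ge c\}$ that contain $\{x_n:n\in E\}$, and then intersecting over $E\in\mathcal I^\star$, one sees that $\ell\in\mathrm{core}_x(\mathcal I)$ if and only if $T(\ell)\ge c$ whenever $T\in X^\ast$, $c\in\mathbf R$, and there is some $E\in\mathcal I^\star$ with $T(x_n)\ge c$ for all $n\in E$. For a fixed $T$, the existence of such an $E$ is equivalent to $\{n:T(x_n)<c\}\in\mathcal I$, i.e.\ to $c\le s_x(T)$; letting $c\uparrow s_x(T)$ collapses the family of constraints for the direction $T$ into the single inequality $T(\ell)\ge s_x(T)$, which yields the displayed identity.

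With the identity in hand, the proof reduces to showing $s_x(T)=s_y(T)$ for every $T\in X^\ast$. Here I would use continuity of $T$: since $x-y\to_{\mathcal I}0$, also $T(x_n)-T(y_n)=T(x_n-y_n)\to_{\mathcal I}0$, so $B_\varepsilon:=\{n:|T(x_n)-T(y_n)|\ge\varepsilon\}\in\mathcal I$ for every $\varepsilon>0$. If $c<s_x(T)$, then $\{n:T(x_n)<c\}\in\mathcal I$, and the inclusion $\{n:T(y_n)<c-\varepsilon\}\subseteq\{n:T(x_n)<c\}\cup B_\varepsilon$ forces $\{n:T(y_n)<c-\varepsilon\}\in\mathcal I$, whence $s_y(T)\ge c-\varepsilon$; letting $\varepsilon\downarrow0$ and $c\uparrow s_x(T)$ gives $s_y(T)\ge s_x(T)$, and the reverse inequality follows by interchanging the roles of $x$ and $y$ (note $y-x\to_{\mathcal I}0$ as well). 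Consequently the two systems of defining inequalities coincide and $\mathrm{core}_x(\mathcal I)=\mathrm{core}_y(\mathcal I)$.

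The step I expect to be the main obstacle is making the half-space reduction of the second paragraph fully rigorous, in particular the interchange of the two intersections and the careful treatment of the supremum (including the degenerate directions where $s_x(T)=\pm\infty$); everything else is a routine $\varepsilon$-comparison. It is worth noting that this route uses neither Theorem~\ref{thm:charactcore} nor the compactness hypothesis. The natural alternative—deducing $\Gamma_x(\mathcal I)=\Gamma_y(\mathcal I)$ from a neighbourhood computation and invoking Theorem~\ref{thm:charactcore}—gives only $\mathrm{core}_x(\mathcal I)=\overline{\mathrm{co}}\,\Gamma_x(\mathcal I)=\overline{\mathrm{co}}\,\Gamma_y(\mathcal I)\subseteq\mathrm{core}_y(\mathcal I)$ via Lemma~\ref{lem:firstinclusioncore}, whereas the reverse inclusion does not follow from Theorem~\ref{thm:charactcore} alone, since $y$ need not be $\mathcal I$-contained in any compact set; this is precisely why I prefer the self-contained separation argument above.
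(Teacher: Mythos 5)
Your proof is correct, and it takes a genuinely different route from the paper's. In fact, the paper's own proof is exactly the ``natural alternative'' you reject in your closing paragraph: it cites \cite[Lemma 3.5.i]{LMxyz} to get $\Gamma_x(\mathcal{I})=\Gamma_y(\mathcal{I})$ and then asserts that the conclusion follows by Theorem \ref{thm:charactcore}. Your objection to that route is substantive, not stylistic: Theorem \ref{thm:charactcore} yields $\mathrm{core}_y(\mathcal{I})=\overline{\mathrm{co}}\,\Gamma_y(\mathcal{I})$ only when $y$ itself is $\mathcal{I}$-confined to a relatively compact set, and this need not follow from the hypotheses. For instance, for the non-P-ideal $\mathcal{I}=\mathrm{Fin}\times\mathrm{Fin}$ on $\omega\times\omega$, $X=\ell_2$, $x\equiv 0$, and $y_{(i,j)}:=e_j/(i+1)$ (standard orthonormal basis), one has $x-y\to_{\mathcal{I}}0$ and $x$ confined to the compact $\{0\}$, yet for every $E\in\mathcal{I}^\star$ some row $i$ contributes infinitely many pairwise $\sqrt{2}/(i+1)$-separated points to $\{y_n:n\in E\}$, so this set never has compact closure (both cores are $\{0\}$, so the Proposition itself is fine). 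Consequently the paper's two-line argument literally establishes only $\mathrm{core}_x(\mathcal{I})=\overline{\mathrm{co}}\,\Gamma_x(\mathcal{I})=\overline{\mathrm{co}}\,\Gamma_y(\mathcal{I})\subseteq\mathrm{core}_y(\mathcal{I})$, the last inclusion by Lemma \ref{lem:firstinclusioncore}; the reverse inclusion needs an extra step, e.g.\ rerunning the separation argument of Theorem \ref{thm:charactcore} for $x$ (compactness gives $E_0\in\mathcal{I}^\star$, a functional $T$ and $r<r'$ with $T(\ell)<r$ and $T(x_n)\ge r'$ on $E_0$) and transferring the half-space to $y$ via $E_1:=\{n:|T(x_n-y_n)|<r'-r\}\in\mathcal{I}^\star$, so that $\overline{\mathrm{co}}\,\{y_n:n\in E_0\cap E_1\}\subseteq T^{-1}([r,\infty))\not\ni\ell$. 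Your support-function identity accomplishes all of this in one stroke: it is self-contained, never uses compactness, and therefore proves the strictly stronger statement in which the hypothesis $\{n:x_n\notin K\}\in\mathcal{I}$ is dropped altogether; what the paper's route buys in exchange is brevity and the conceptual link to $\mathcal{I}$-cluster points. One minor imprecision in your second paragraph: the existence of $E$ with $T(x_n)\ge c$ on $E$ is equivalent to $\{n:T(x_n)<c\}\in\mathcal{I}$, which can fail at $c=s_x(T)$ itself when that supremum is not attained; this is harmless, since, as you then say, the constraints for a fixed $T$ still collapse to the single closed condition $T(\ell)\ge s_x(T)$.
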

\begin{proof}
By \cite[Lemma 3.5.i]{LMxyz} we have $\Gamma_x(\mathcal{I})=\Gamma_y(\mathcal{I})$. The conclusion follows by Theorem \ref{thm:charactcore}.
\end{proof}

%%%%%%%%%%%%%%%%%%%%%%%%%%%%%%%%%%%%%%%%%%%%%%%%%%%%%%%%%%%%%%%%%%%%

\subsection{Euler $r$-core} Fix $r \in \mathbf{R}$ and define the infinite lower triangular matrix $A=(a_{n,k}: n,k \ge 0)$ by 
$$
a_{n,k}:=\binom{n}{k}(1-r)^{n-k}r^k.
$$
if $n\ge k$ and $a_{n,k}:=0$ otherwise. The \emph{Euler} $r$\emph{-core} of a complex-valued sequence $x$, denoted by $\mathrm{E}\text{-}\mathrm{core}(x)$, has been defined in \cite[Definition 2]{MR2657795} as the least closed convex hull containing $\{y_n,y_{n+1},\ldots\}$ for all $n \in \omega$, where $y_n:=\sum_{k=0}^n a_{n,k}x_k$ for all $n$. In other words, this is the Knopp core of $Ax$, that is, 
$$
\mathrm{E}\text{-}\mathrm{core}(x)=\mathrm{core}_{Ax}(\mathrm{Fin}).
$$
In this context, the analogue of \eqref{eq:firstcharacterizationold} has been shown in \cite[Theorem 18]{MR2657795}; hence it is a special case of Theorem \ref{thm:anothercharactIcoremain}.

\section*{Acknowledgments}
The author is grateful to an anonymous referee for suggestions that helped improving the overall presentation of the paper.
%

%%%%%%%%%%%%%%%%%%%%%%%%%%%%%%%%%%%%%%%%%%%%%%%%%%%%%%%%%%%%%%%%%%%%%%%%%%%%%%%%%%%%%%%%%
%%%%%%%%%%%%%%%%%%%%%%%%%%%%%%%%%%%%%%%%%%%%%%%%%%%%%%%%%%%%%%%%%%%%%%%
%\nocite{*}
%\bibliographystyle{amsplain}
%\bibliography{ideale}

\end{document}